    \newcommand{\norm}[1]{\left\lVert#1\right\rVert}
    \newcommand{\report}[1]{}
    \newcommand{\rev}[1]{#1}
    \newcommand{\revtwo}[1]{#1}
    \newtheorem{theorem}{Theorem}[section]
    \newtheorem{lemma}[theorem]{Lemma}
\title{Concavifiability and convergence: necessary and sufficient conditions for gradient descent analysis}
\author{%
  Thulasi Tholeti \\
  Department of Electrical Engineering\\
  IIT Madras\\
  India, 600036 \\
  \texttt{ee15d410@ee.iitm.ac.in} \\
   \And
  Sheetal Kalyani \\
  Department of Electrical Engineering\\
  IIT Madras\\
  India, 600036 \\
  \texttt{skalyani@ee.iitm.ac.in} \\
}
\begin{document}
\maketitle

\begin{abstract}
    Convergence of the gradient descent algorithm has been attracting renewed interest due to its utility in deep learning applications. Even as multiple variants of gradient descent were proposed, the assumption that the gradient of the objective is Lipschitz continuous remained an integral part of the analysis until recently. In this work, we look at convergence analysis by focusing on a property that we term as concavifiability, instead of Lipschitz continuity of gradients. We show that concavifiability is a necessary and sufficient condition to satisfy the upper quadratic approximation which is key in proving that the objective function decreases after every gradient descent update. We also show that any gradient Lipschitz function satisfies concavifiability. A constant known as the concavifier analogous to the gradient Lipschitz constant is derived which is indicative of the optimal step size. As an application, we demonstrate the utility of finding the concavifier the in convergence of gradient descent through an example inspired by neural networks. We derive bounds on the concavifier to obtain a fixed step size for a single hidden layer ReLU network.
\end{abstract}

\section{Introduction}
Gradient descent is a well-known iterative optimization algorithm employed to minimize a function. It operates by taking steps in the direction opposite to that of the direction of the gradient at that point. Being a first-order method that uses gradients, it is less computationally intensive than other optimization methods which employ second-order Hessian information \cite{battiti1992first}. This attribute is particularly useful while dealing with high dimensional problems \cite{bottou2010large}. Gradient descent and its stochastic variants have now regained attention as these methods are avidly employed in training deep neural networks.

Gradient descent and its stochastic version are well-studied for convex functions where convergence to the unique minimum of the function was guaranteed \cite{nesterov1998introductory}. Over and above convexity, one of the basic assumptions made is that the gradients of the objective obey Lipschitz continuity (see Definition 1).
\paragraph{Definition 1: Gradient Lipschitz function} A differentiable function \(f: \mathbb{R}^d \rightarrow \mathbb{R}\) is said to be \(L\)- gradient Lipschitz if for any \(\bm{x_1},\bm{x_2}\) in the domain of \(f\), and for \(L > 0\),
\begin{equation} \label{eqn:lipschitz}
    \norm{ \nabla f(\bm{x_1}) - \nabla f(\bm{x_2}) } \leq L  \norm{\bm{x_1} - \bm{x_2} }
\end{equation}

The Lipschitz gradient assumption allows the function to have an upper quadratic approximation which is essential to derive a descent lemma (Lemma \ref{lemma:descent}) that shows that the objective value decreases after every iteration of gradient descent. 

Recently, there has been interest in extending these iterative optimization algorithms to a more general setting. In case of non-convex functions, gradient descent is guaranteed to converge to a first-order stationary point \cite{jin2017escape}. There has been focus on the ability of the gradient descent algorithm to escape saddle points for a non-convex function in \cite{jin2017escape,Lee2019,du2017gradient} and accelerating the convergence in \cite{nitanda2014stochastic,johnson2013accelerating}, where the objective function is assumed to be gradient Lipschitz.

With the advent of machine learning came the necessity to optimize non-smooth objective functions due to attributes like hinge loss in Support Vector Machines and Rectified Linear Unit (ReLU) activation functions in neural networks. In \cite{shamir2013stochastic}, the authors provide finite-sample bounds on the optimization error of individual iterates for stochastic gradient descent applied on a non-smooth but convex function. Addressing the problem of non-smooth as well as non-convex optimization, an alternate descent lemma was derived in \cite{bolte2018first} for composite objectives, i.e., the objective is of the form \(f +g\). The authors introduce a class of functions known as smooth adaptable functions for which the alternate descent lemma applies. However, this was in context of the proximal descent algorithm where finding every iterate is a minimization problem on a compact set. The work in \cite{khamaru2018convergence} also discusses non-smooth and non-convex optimization for composite functions. The descent lemma is derived with the additional assumption that a component of the composite objective is smooth. In the literature that deals with the analysis of gradient descent for composite functions, although the sum of the components is non-smooth, conditions are imposed on the individual components \cite{bolte2018first,khamaru2018convergence}. Moreover, given a function, the authors do not specify if it can be brought to the composite form as required by the analysis. 

\paragraph{Contributions} We initially discuss the classical descent lemma and the conditions that are typically used to derive the same. We then extend the applicability of the descent lemma to a class of functions that we define as concavifiable functions characterized by a constant that we term as concavifier. We then show that concavifiability is a necessary and sufficient condition to derive the upper quadratic approximation which is instrumental in proving the traditional descent lemma. \revtwo{When gradient descent is employed in training neural networks, the step size needs to be chosen carefully and is one of the important hyper-parameters to be optimized for before training the network. In our work, we consider the case of the optimization problem in training a single hidden network with ReLU activation function. We derive an expression for explicitly computing the concavifier. As the computation will involve a brute force search, we provide upper bounds on the optimal concavifier and we also demonstrate the performance of gradient descent when step sizes are chosen according to the derived bounds. The bounds derived in our work can aid in finding the optimal step size for training thereby avoiding a grid search.}

\paragraph{Notation}
We consider a continuous function \(f(\bm{x}): \mathbb{R}^d \rightarrow \mathbb{R}\) where \(\bm{x} \in \mathbb{R}^d\). We denote bold upper-case letters \(\bm{A},\bm{B}\) to denote matrices and \(a_{ij},b_{ij}\) to denote their \((i,j)\)th elements respectively. The maximum eigen value of \(\bm{A}\) is denoted as \(\lambda_{max}(\bm{A})\). The bold lower-case letters \(\bm{x},\bm{y}\) denote vectors.  All vectors are column vectors unless stated otherwise. The \(\ell_2 \) norm of a vector is denoted as \(\norm{.}\).  The indicator function denoted as \(\mathbb{I}_{\mathcal{E}}\) takes the value 1 when \(\mathcal{E}\) is true and value 0 otherwise.

\section{Classical gradient descent analysis}
We consider the classic problem of unconstrained minimization of a function \(f(x)\). Gradient descent is a well-established algorithm to solve this problem iteratively. The algorithm picks an initial point and iteratively moves towards the minimum by taking a step in the direction opposite to the gradient at that point. The update at any step \(t\) is given by 
\begin{equation} \label{eqn:updateEqn}
    \bm{x_t} = \bm{x_{t-1}} - \eta \nabla f(\bm{x_{t-1}})
\end{equation}
where \(\eta\) refers to the fixed step size.
The analysis of this algorithm is now being re-looked at, primarily because of the use of iterative optimization algorithms in training neural networks. Previously, in the analysis of gradient descent(e.g., \cite{nesterov1998introductory}) as well as its variants, the basic assumption made is that the function to be minimized is gradient Lipschitz continuous, i.e., it satisfies Equation \ref{eqn:lipschitz}. \rev{This property is required so that an upper quadratic approximation (Equation \ref{eqn:quadApprox}) can be given for the function. The term upper quadratic approximation is defined below.}

\paragraph{Definition 2: Upper quadratic approximation} The upper quadratic approximation of a differentiable function \(f: \mathbb{R}^d \rightarrow \mathbb{R}\) is said to hold with a constant \(c \geq 0\) if for all \(\bm{x},\bm{y} \in \mathbb{R}^d\),
\begin{equation} \label{eqn:quadApprox}
        f(\bm{y}) \leq f(\bm{x}) + \nabla f(\bm{x})^T(\bm{y}-\bm{x}) + \dfrac{c}{2} \norm{\bm{y}-\bm{x}}^2.
    \end{equation}
    
\rev{We use Lemma \ref{lemma: quadApprox} from \cite{nesterov1998introductory} to show that gradient Lipschitz functions satisfy the upper quadratic approximation; this property is, in turn, used to prove that the objective function decreases after every iteration of the algorithm in Lemma \ref{lemma:descent}.}
\begin{lemma} \label{lemma: quadApprox}
    If a function \(f\) is \(L\)-gradient Lipschitz , the upper quadratic approximation with a constant \(L\) holds.
\end{lemma}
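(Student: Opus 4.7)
The plan is to use the fundamental theorem of calculus to express $f(\bm{y})-f(\bm{x})$ as a line integral of $\nabla f$ along the segment from $\bm{x}$ to $\bm{y}$, and then use the $L$-Lipschitz bound on $\nabla f$ to control how much this integral deviates from the linear term $\nabla f(\bm{x})^T(\bm{y}-\bm{x})$ that appears in \eqref{eqn:quadApprox}.

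First I would parameterize the segment by $\bm{\phi}(t)=\bm{x}+t(\bm{y}-\bm{x})$ for $t\in[0,1]$ and apply the chain rule to $g(t)=f(\bm{\phi}(t))$, which gives $g'(t)=\nabla f(\bm{\phi}(t))^T(\bm{y}-\bm{x})$. Integrating from $0$ to $1$ yields $f(\bm{y})-f(\bm{x})=\int_0^1 \nabla f(\bm{\phi}(t))^T(\bm{y}-\bm{x})\,dt$. Since $\nabla f(\bm{x})^T(\bm{y}-\bm{x})=\int_0^1 \nabla f(\bm{x})^T(\bm{y}-\bm{x})\,dt$, subtracting produces the key identity $f(\bm{y})-f(\bm{x})-\nabla f(\bm{x})^T(\bm{y}-\bm{x})=\int_0^1 [\nabla f(\bm{\phi}(t))-\nabla f(\bm{x})]^T(\bm{y}-\bm{x})\,dt$.

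Next I would bound the right-hand side by passing the absolute value inside the integral and applying Cauchy--Schwarz to get an integrand of the form $\|\nabla f(\bm{\phi}(t))-\nabla f(\bm{x})\|\cdot\|\bm{y}-\bm{x}\|$. Invoking Definition 1 with the points $\bm{\phi}(t)$ and $\bm{x}$ gives $\|\nabla f(\bm{\phi}(t))-\nabla f(\bm{x})\|\leq L\|\bm{\phi}(t)-\bm{x}\|=Lt\|\bm{y}-\bm{x}\|$. Substituting and evaluating $\int_0^1 t\,dt=\tfrac{1}{2}$ yields the bound $\tfrac{L}{2}\|\bm{y}-\bm{x}\|^2$, which is exactly \eqref{eqn:quadApprox} with $c=L$.

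There is no serious obstacle: gradient Lipschitzness implies $\nabla f$ is continuous, so the Riemann integral above is well-defined, and the only substantive inequalities are Cauchy--Schwarz and the hypothesis itself. The whole argument is essentially a one-line computation once the integral representation is in place; the main thing to be careful about is the factor of $t$ coming from the parameterization, since this is what produces the $1/2$ in the final constant.
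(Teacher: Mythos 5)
Your proof is correct and is exactly the standard argument: the paper itself states this lemma without proof, citing \cite{nesterov1998introductory}, and the proof given there proceeds precisely as you do, via the integral representation of $f(\bm{y})-f(\bm{x})$ along the segment, Cauchy--Schwarz, and the Lipschitz bound producing the factor $\int_0^1 t\,dt=\tfrac{1}{2}$. No gaps to report.
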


\begin{lemma}\textbf{(Descent lemma) \cite{nesterov1998introductory}} \label{lemma:descent}
    For an \(L\)-gradient Lipschitz function \(f: \mathbb{R}^d \rightarrow \mathbb{R}\), gradient descent with a step size \(\eta \leq 1/L\) produces a decreasing sequence of objective values and the optimal step size is given by \(\eta^* = 1/L\).
\end{lemma}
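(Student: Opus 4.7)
The plan is to apply Lemma~\ref{lemma: quadApprox} directly to the gradient descent iterate defined by (\ref{eqn:updateEqn}). First I would substitute $\bm{y} = \bm{x_t} = \bm{x_{t-1}} - \eta\,\nabla f(\bm{x_{t-1}})$ and $\bm{x} = \bm{x_{t-1}}$ into the upper quadratic approximation~(\ref{eqn:quadApprox}) with constant $c = L$. The linear term collapses to $-\eta\,\norm{\nabla f(\bm{x_{t-1}})}^2$ and the quadratic penalty becomes $\tfrac{L\eta^2}{2}\norm{\nabla f(\bm{x_{t-1}})}^2$, yielding the one-step inequality
\[
f(\bm{x_t}) \;\le\; f(\bm{x_{t-1}}) - \eta\!\left(1 - \tfrac{L\eta}{2}\right)\norm{\nabla f(\bm{x_{t-1}})}^2.
\]

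Next I would read off both conclusions from this scalar expression. For any $\eta$ with $0 < \eta \le 1/L$ the coefficient $\eta\,(1 - L\eta/2)$ is non-negative, so $f(\bm{x_t}) \le f(\bm{x_{t-1}})$; if $\nabla f(\bm{x_{t-1}}) \neq \bm{0}$ the inequality is strict, and otherwise the iterate is already stationary and the sequence is constant from that point on. This gives the decreasing sequence claim.

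For the optimal step size, I would treat $\eta \mapsto \eta\,(1 - L\eta/2)$ as a concave one-variable quadratic; setting its derivative $1 - L\eta$ to zero yields $\eta^{\star} = 1/L$, which maximizes the guaranteed per-iteration decrease at $\tfrac{1}{2L}\norm{\nabla f(\bm{x_{t-1}})}^2$. Since the bound in the previous display is the tightest inequality that follows from the Lipschitz-gradient hypothesis alone, the maximizer of its right-hand-side coefficient is the optimal fixed step size in the sense of the descent guarantee.

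No step is a serious obstacle: the argument is essentially one substitution followed by a one-variable optimization. The only subtlety worth flagging is that the same inequality would permit strict descent for every $\eta < 2/L$, so the hypothesis $\eta \le 1/L$ in the statement is slightly conservative; restricting to this range is precisely what ensures that the guaranteed decrease is a \emph{monotone increasing} function of $\eta$, which is what makes $\eta^{\star} = 1/L$ emerge as the boundary-optimal choice rather than an interior stationary point of a non-monotone map.
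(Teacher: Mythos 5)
Your proposal is correct and follows essentially the same route as the paper: substitute the update rule into the upper quadratic approximation of Lemma~\ref{lemma: quadApprox}, obtain the one-step inequality with coefficient $\eta\left(1 - \frac{L\eta}{2}\right)$, invoke $\eta \leq 1/L$ for monotone descent, and maximize that coefficient to get $\eta^* = 1/L$. Your closing remark is slightly muddled in calling $1/L$ a ``boundary-optimal'' choice --- it is in fact the interior stationary point of $\eta \mapsto \eta\left(1 - \frac{L\eta}{2}\right)$ on $(0, 2/L)$, exactly as the paper's optimization finds --- but this does not affect the validity of the argument.
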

\begin{proof}  
    For an \(L\)-gradient Lipschitz function, using Lemma \ref{lemma: quadApprox}, 
    \begin{equation}
        f(\bm{y}) \leq f(\bm{x}) + \nabla f(\bm{x})^T(\bm{y}-\bm{x}) + \dfrac{L}{2} \norm{\bm{y}-\bm{x}}^2 \forall \bm{x},\bm{y} \in \mathbb{R}^d
    \end{equation}
    Setting $\bm{x}= \bm{x_t}$ and $\bm{y} = \bm{x_{t+1}} = \bm{x_t} - \eta \nabla f(\bm{x_t})$,
    \begin{align}
    f(\bm{x_{t+1}}) &\leq f(\bm{x_t}) + \nabla f(\bm{x_t})^T(\bm{x_t} - \eta \nabla f(\bm{x_t})-\bm{x_t}) + 
    \dfrac{L}{2} \norm{\bm{x_t} - \eta \nabla f(\bm{x_t})-\bm{x_t}}^2 \nonumber\\
    f(\bm{x_{t+1}}) &\leq f(\bm{x_t}) - \left(1 - \dfrac{L \eta}{2}\right) \eta \norm{\nabla f(\bm{x_t})}^2 \nonumber \\
    f(\bm{x_{t+1}}) &\leq f(\bm{x_t}) - \dfrac{\eta}{2}  \norm{\nabla f(\bm{x_t})}^2 \label{eqn:dec}\\
    f(\bm{x_{t+1}}) &\leq f(\bm{x_t})
\end{align}
We use the condition that the step size \( \eta \leq 1/L\) to obtain Equation \ref{eqn:dec} and show that \(f(\bm{x_t})\) decreases with increase in \(t\) until convergence as the quantity \(\norm{\nabla f(\bm{x_t})}^2\) is non-negative. To achieve the maximum possible decrease in the functional value after a gradient descent iteration, we need to maximize the quantity \(f(\bm{x_t}) - f(\bm{x_{t+1}}) \) which is equivalent to maximizing \(\left(1 - \dfrac{L \eta}{2}\right) \eta\) with respect to the step size \(\eta\). Therefore, \(\eta =1 /L\) is the optimal choice for the step size.
\end{proof}

Note that in case the objective function is convex, Lemma \ref{lemma:descent} proves that gradient descent converges to the global minimum. For a non-convex problem, Lemma \ref{lemma:descent} shows convergence to a first-order stationary point; i.e., a point where the gradient of the function is zero.

\section{Can the descent lemma be proved for a broader class of functions?}
The descent lemma is essential to establish the convergence of gradient descent algorithm. We make the following important observation: the upper quadratic approximation presented in Equation \ref{eqn:quadApprox} is instrumental in proving the descent lemma and not the gradient Lipschitz property (Equation \ref{eqn:lipschitz}) itself. In this section, we ask if there exists a class of functions that is necessary and sufficient to achieve the upper quadratic approximation. In this context, the work in \cite{bolte2018first} defines a class of functions known as smooth adaptable functions as follows: a pair \((g,h)\) is \(L\)-smooth adaptable if \(Lh-g\) is convex for a convex \(h\). Independently, a class of functions known as the convexifiable functions which are of the form \(f(\bm{x}) - \frac{\alpha}{2}\norm{\bm{x}}^2\) for a constant \(\alpha\), was introduced in \cite{zlobec2006characterization}; \rev{the major focus was to find convexifiable functions so that those class of problems can also be optimized using linear and convex programming.} Drawing inspiration from the above two definitions, we define the following class of functions.

\paragraph{Definition 3: Concavifiable functions} A function \(f\) is defined to be concavifiable if the function \[g(\bm{x}) = \dfrac{\alpha}{2} \norm{\bm{x}}^2 - f(\bm{x})\] is convex, where the constant \(\alpha>0\) is termed as the concavifier. The least value of \(\alpha\) for which \(g(x)\) is convex is termed as the optimal concavifier, denoted as \(\alpha^*\).\\
Note that any \(\alpha>\alpha^*\) will also be a concavifier for \(f(\bm{x})\). In \cite{zlobec2006characterization}, the class of functions which are a quadratic term away from a convex function are termed as convexifiable. Similarly, according to our definition, the function \(f(\bm{x})\) is a quadratic term away from a concave function \(- g(\bm{x})\); therefore, we term these class of functions as concavifiable functions. The scale of the quadratic term is given by the concavifier. We now argue that concavifiability is a necessary and sufficient condition for obtaining Equation \ref{eqn:quadApprox}. 
\begin{lemma} \label{lemma:concavifiable}
    A differentiable function \(f\) is concavifiable with a concavifier \(\alpha\) if and only if it satisfies the upper quadratic approximation with the constant \(\alpha \geq 0\).
\end{lemma}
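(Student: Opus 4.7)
The plan is to prove both directions simultaneously by reducing the concavifiability condition on $g(\bm{x}) = \tfrac{\alpha}{2}\norm{\bm{x}}^2 - f(\bm{x})$ to the first-order characterization of convexity, and then recognizing that the resulting inequality is algebraically equivalent to the upper quadratic approximation. Since $f$ is differentiable and the quadratic $\tfrac{\alpha}{2}\norm{\bm{x}}^2$ is differentiable, $g$ is differentiable with $\nabla g(\bm{x}) = \alpha \bm{x} - \nabla f(\bm{x})$, so the well-known equivalence ``$g$ differentiable and convex $\iff$ $g(\bm{y}) \geq g(\bm{x}) + \nabla g(\bm{x})^T(\bm{y}-\bm{x})$ for all $\bm{x},\bm{y}$'' applies.

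First I would expand the first-order convexity inequality for $g$ explicitly:
\[
\tfrac{\alpha}{2}\norm{\bm{y}}^2 - f(\bm{y}) \;\geq\; \tfrac{\alpha}{2}\norm{\bm{x}}^2 - f(\bm{x}) + \bigl(\alpha \bm{x} - \nabla f(\bm{x})\bigr)^T(\bm{y}-\bm{x}).
\]
Next I would isolate the $f$ terms and the quadratic terms, giving
\[
f(\bm{y}) \;\leq\; f(\bm{x}) + \nabla f(\bm{x})^T(\bm{y}-\bm{x}) + \tfrac{\alpha}{2}\bigl(\norm{\bm{y}}^2 - \norm{\bm{x}}^2\bigr) - \alpha \bm{x}^T(\bm{y}-\bm{x}).
\]
The key algebraic identity I would then invoke is
\[
\norm{\bm{y}}^2 - \norm{\bm{x}}^2 - 2\bm{x}^T(\bm{y}-\bm{x}) \;=\; \norm{\bm{y}-\bm{x}}^2,
\]
which collapses the last two terms on the right-hand side to exactly $\tfrac{\alpha}{2}\norm{\bm{y}-\bm{x}}^2$, yielding precisely Equation~\ref{eqn:quadApprox} with constant $\alpha$.

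Because every implication in this chain is an equivalence (the first-order characterization of convexity is an ``if and only if'' for differentiable functions, and the algebraic manipulations are reversible), the two conditions are equivalent and both directions are handled at once: if $g$ is convex then the upper quadratic approximation holds, and conversely if the upper quadratic approximation holds then the first-order convexity inequality for $g$ holds at every pair $(\bm{x},\bm{y})$, which (for a differentiable function) forces $g$ to be convex, i.e., $f$ to be concavifiable with concavifier $\alpha$.

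There is no real obstacle here; the only thing to be a bit careful about is citing the correct form of the first-order convexity characterization (which requires differentiability, already assumed) and writing the quadratic-completion identity cleanly. The sign of $\alpha$ is automatic: $\alpha \geq 0$ is implicit in the definition of the concavifier, and also follows from the upper quadratic approximation by symmetrizing the inequality in $\bm{x}$ and $\bm{y}$ and adding.
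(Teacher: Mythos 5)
Your proposal is correct and takes essentially the same route as the paper's own proof: both reduce concavifiability of \(g(\bm{x}) = \tfrac{\alpha}{2}\norm{\bm{x}}^2 - f(\bm{x})\) to the first-order characterization of convexity for differentiable functions and rearrange it, via the completion identity \(\norm{\bm{y}}^2 - \norm{\bm{x}}^2 - 2\bm{x}^T(\bm{y}-\bm{x}) = \norm{\bm{y}-\bm{x}}^2\), into the upper quadratic approximation. If anything, you are slightly more careful than the paper in explicitly noting that every step in the chain is reversible, which is precisely what delivers the converse direction that the paper leaves implicit in its ``if and only if'' invocation of the first-order condition.
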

\begin{proof}
    Let us assume that the function \(f\) is concavifiable. For a concavifiable function \(f\), \(g(\bm{x}) = \dfrac{\alpha}{2} \norm{\bm{x}}^2 - f(\bm{x})\) is convex. Therefore, by the first-order condition for convexity, the function \(g\) is convex if and only if 
    \begin{align}
        g(\bm{y}) &\geq g(\bm{x}) + \nabla g(\bm{x})^T (\bm{y}-\bm{x}).  \\
        \dfrac{\alpha}{2}\norm{\bm{y}}^2 - f(\bm{y}) &\geq \dfrac{\alpha}{2}\norm{\bm{x}}^2 - f(\bm{x}) + (\alpha \bm{x} - \nabla f(\bm{x}) )^T(\bm{y}-\bm{x}) \\
        f(\bm{y}) &\leq f(\bm{x}) + \dfrac{\alpha}{2} \left[ \norm{\bm{y}}^2 - \norm{\bm{x}}^2 \right] - (\alpha \bm{x} - \nabla f(\bm{x}))^T (\bm{y}-\bm{x}) \\
        f(\bm{y}) &\leq f(\bm{x}) + \nabla f(\bm{x})^T(\bm{y}-\bm{x}) + \dfrac{\alpha}{2} \norm{\bm{y}-\bm{x}}^2.
    \end{align}
    This shows that \(f\) satisfies the upper quadratic equation with constant \(\alpha\).
    \end{proof}

Lemma \ref{lemma:concavifiable} not only shows that concavifiable functions satisfy the upper quadratic approximation. It also shows that this class cannot be extended any further as concavifiability is a necessary as well as a sufficient condition for the upper quadratic approximation. \rev{In \cite{zhou2018fenchel}, the author illustrates the equivalence of the upper quadratic approximation with constant \(L\) and functions of the form \(L \norm{\bm{x}}^2 /2 - f(\bm{x})\) which is a special case of the result in Lemma \ref{lemma:concavifiable} when the concavifier is the Lipschitz constant \(L\). The next theorem shows that, for the class of concavifiable functions, gradient descent produces a decreasing sequence of objective values.} 

\begin{theorem} \label{theorem: main}
    Assume that \(f\) is a concavifiable function with a concavifier \(\alpha>0\). Then, on applying gradient descent with a step size \(\eta \leq 1/\alpha \), for any time step \(t\), we get,
    \begin{equation} \label{eqn:des}
        f(\bm{x_{t+1}}) \leq f(\bm{x_t}) - \dfrac{\eta}{2}  \norm{\nabla f(\bm{x_t})}^2.
    \end{equation}
    \rev{The optimal step size is given by \(\eta^* = 1/\alpha^*\) where \(\alpha^*\) is the lowest possible concavifier.}
\end{theorem}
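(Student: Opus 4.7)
The plan is to essentially replay the proof of Lemma \ref{lemma:descent} with the gradient Lipschitz constant $L$ replaced by the concavifier $\alpha$, since Lemma \ref{lemma:concavifiable} gives us exactly the upper quadratic approximation that was the only property of Lipschitz gradients used in that earlier proof. So the first step is to invoke Lemma \ref{lemma:concavifiable} to write the upper quadratic approximation for $f$ with constant $\alpha$, valid at every pair of points in $\mathbb{R}^d$.

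Next I would specialize this inequality by setting $\bm{x} = \bm{x_t}$ and $\bm{y} = \bm{x_{t+1}} = \bm{x_t} - \eta \nabla f(\bm{x_t})$, exactly as done in the proof of Lemma \ref{lemma:descent}. After substitution and collecting the common $\norm{\nabla f(\bm{x_t})}^2$ factor, the right-hand side becomes $f(\bm{x_t}) - \left(1 - \tfrac{\alpha \eta}{2}\right)\eta \norm{\nabla f(\bm{x_t})}^2$. Then invoking the hypothesis $\eta \leq 1/\alpha$ implies $1 - \alpha\eta/2 \geq 1/2$, which immediately yields the claimed inequality \eqref{eqn:des}. Since $\norm{\nabla f(\bm{x_t})}^2 \geq 0$, this also shows the objective sequence is non-increasing.

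For the optimal step size claim, I would argue in two stages. First, fix a concavifier $\alpha$: the guaranteed per-iteration decrease is $\left(1 - \tfrac{\alpha \eta}{2}\right)\eta \norm{\nabla f(\bm{x_t})}^2$; as a function of $\eta$, the coefficient $\left(1 - \tfrac{\alpha \eta}{2}\right)\eta$ is a downward parabola maximized at $\eta = 1/\alpha$, giving a maximal guaranteed decrease proportional to $1/(2\alpha)$. Second, since any $\alpha > \alpha^*$ is also a valid concavifier (by Definition 3), the tightest upper quadratic approximation — and hence the largest guaranteed decrease $1/(2\alpha)$ — is obtained at $\alpha = \alpha^*$, so the optimal step size is $\eta^* = 1/\alpha^*$.

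The main subtlety I anticipate is the second stage of the step size argument: one must be careful to distinguish between the decrease actually achieved and the decrease guaranteed by the bound. The argument above only chooses $\eta$ to optimize the \emph{bound}, and the justification for choosing $\alpha^*$ rather than some larger $\alpha$ relies on the fact that the bound with constant $\alpha^*$ dominates the bound with any larger $\alpha$ at their respective optimal step sizes (since $1/(2\alpha^*) > 1/(2\alpha)$ for $\alpha > \alpha^*$). Everything else is a mechanical substitution that mirrors the classical descent lemma, so no new technical obstacle arises beyond invoking Lemma \ref{lemma:concavifiable}.
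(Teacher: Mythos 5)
Your proposal is correct and follows essentially the same route as the paper: the paper's proof likewise combines Lemma \ref{lemma:concavifiable} (the upper quadratic approximation with constant $\alpha$) with the substitution argument of Lemma \ref{lemma:descent}, and then obtains $\eta^* = 1/\alpha^*$ by noting that the guaranteed descent is maximized at the smallest concavifier. If anything, your explicit distinction between the decrease actually achieved and the decrease merely guaranteed by the bound is more careful than the paper's brief remark.
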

\begin{proof}
    The proof is obtained by using Lemma \ref{lemma:descent} in the context of Lemma \ref{lemma:concavifiable}. \rev{Lemma \ref{lemma:descent} shows that a function for which the upper quadratic approximation holds with a constant \(L\), the optimal step size is given by \(\eta^* = 1/L\). By a similar argument, the optimal step size for a concavifiable function (for which the upper quadratic approximation holds with constant \(\alpha\)) is \(\eta = 1/\alpha\). However, we note that to achieve the maximum descent (i.e. \(f(\bm{x_t}) -f(\bm{x_{t+1}})\)), we need to maximize the step size \(\eta\) which is achieved for the optimal concavifier. As \(\eta\) is maximized for the lowest value of \(\alpha\), the optimal step size will be \(\eta^* = 1/\alpha^*\).}
\end{proof}

\subsection{Importance of the concavifier}
\rev{In this section, we bring out the significance of finding the concavifier. As pointed out by Theorem \ref{theorem: main}, the inverse of the optimal concavifier gives the best fixed value step size that one can employ for gradient descent. For training neural networks, we need to select hyper-parameters such as learning rate, number of hidden layers, etc. which was typically done through a grid search as suggested in \cite{bergstra2012random} or more recently, through Bayesian optimization \cite{snoek2012practical}. These methods take up a significant amount of compute resources. In this work, by explicitly computing the optimal step size, we do away with the necessity to tune the learning rate parameter. Even for the case of more complicated networks where an exact computation is not possible, we can use the upper bound on the concavifier to obtain a restricted range over which a search can be performed with lesser computational effort in order to tune the learning rate.}

\subsection{Finding the concavifier}
In this section, we propose a method to characterize the value of the optimal concavifier for a given concavifiable function \(f\), i.e., the lowest value of the concavifier for which the function is concavifiable. We are interested in finding the optimal concavifier in the context of gradient descent as the inverse of the optimal step size is indicative of the optimal step size as pointed out in Theorem \ref{theorem: main}. \\
We now discuss how to find the concavifier for the class of doubly differentiable functions. The method employed is similar to the work in \cite{zlobec2003estimating}.

\begin{lemma} \label{lemma:doubleDiff}
    All doubly differentiable functions are concavifiable and the optimal concavifier is given by \[\alpha^* = \max_{\bm{x}} \lambda_{max}(\nabla^2 f(\bm{x})).\]
\end{lemma}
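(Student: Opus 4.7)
The plan is to reduce the statement to a standard Hessian characterization of convexity applied to the auxiliary function $g(\bm{x}) = \tfrac{\alpha}{2}\|\bm{x}\|^2 - f(\bm{x})$ from Definition 3. Since $f$ is twice differentiable, so is $g$, and a direct computation gives $\nabla^2 g(\bm{x}) = \alpha \bm{I} - \nabla^2 f(\bm{x})$. By the second-order characterization of convexity on $\mathbb{R}^d$, $g$ is convex if and only if $\nabla^2 g(\bm{x}) \succeq 0$ for every $\bm{x}$, i.e., if and only if $\alpha \bm{I} - \nabla^2 f(\bm{x}) \succeq 0$ for all $\bm{x}$.

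Next I would translate this PSD condition into a scalar one via the variational characterization of eigenvalues: $\alpha \bm{I} - \nabla^2 f(\bm{x}) \succeq 0$ is equivalent to $\lambda_{max}(\nabla^2 f(\bm{x})) \leq \alpha$. Therefore $g$ is convex on $\mathbb{R}^d$ precisely when $\alpha \geq \sup_{\bm{x}} \lambda_{max}(\nabla^2 f(\bm{x}))$. In particular, taking $\alpha$ equal to this supremum (assumed to be attained, hence the $\max$) certifies that $f$ is concavifiable, and any strictly smaller value of $\alpha$ would fail the PSD condition at some point, so it cannot serve as a concavifier. Hence the optimal (smallest) concavifier is $\alpha^* = \max_{\bm{x}} \lambda_{max}(\nabla^2 f(\bm{x}))$, as claimed.

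The main conceptual obstacle is not the algebra but a well-posedness issue: the maximum over $\bm{x} \in \mathbb{R}^d$ of $\lambda_{max}(\nabla^2 f(\bm{x}))$ need not exist or even be finite for an arbitrary doubly differentiable $f$ (e.g., $f(x) = x^4$ yields an unbounded largest eigenvalue). A clean reading of the lemma is therefore that a doubly differentiable $f$ is concavifiable exactly when this supremum is finite, in which case the stated formula identifies the optimal concavifier; if one additionally assumes the supremum is attained, the $\max$ in the statement is literal. I would flag this in the proof and, for the sufficiency direction, simply substitute $\alpha = \max_{\bm{x}} \lambda_{max}(\nabla^2 f(\bm{x}))$ into $\nabla^2 g$ to conclude PSD, then invoke Lemma \ref{lemma:concavifiable} to tie back to the upper quadratic approximation and thus to the setting used throughout Section 3.
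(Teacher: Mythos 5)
Your proposal follows essentially the same route as the paper's proof: both form the auxiliary function $g(\bm{x}) = \frac{\alpha}{2}\norm{\bm{x}}^2 - f(\bm{x})$, compute $\nabla^2 g(\bm{x}) = \alpha \bm{I}_d - \nabla^2 f(\bm{x})$, and reduce concavifiability to positive semi-definiteness of this Hessian, i.e., to $\alpha \geq \lambda_{max}(\nabla^2 f(\bm{x}))$ for all $\bm{x}$. You go beyond the paper in two respects, both to your credit. First, the paper only verifies the sufficiency direction --- that $\alpha^* = \max_{\bm{x}} \lambda_{max}(\nabla^2 f(\bm{x}))$ \emph{is} a concavifier --- and never argues that no smaller constant works; you supply the missing necessity step by invoking the ``if and only if'' in the second-order characterization of convexity to show that any $\alpha < \alpha^*$ violates the PSD condition at some point, which is what actually justifies calling $\alpha^*$ optimal. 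Second, your well-posedness caveat identifies a real flaw in the lemma as stated: for a function such as $f(x) = x^4$ one has $\sup_x \lambda_{max}(\nabla^2 f(x)) = +\infty$, so no finite $\alpha$ concavifies $f$ and the claim that \emph{all} doubly differentiable functions are concavifiable is false; the correct reading, as you note, is that a doubly differentiable $f$ is concavifiable exactly when this supremum is finite, with the stated formula giving $\alpha^*$ when the supremum is attained. The paper's proof silently assumes this finiteness and attainment, so your version is the more rigorous rendering of the same argument.
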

\begin{proof}
    For a doubly differentiable function \(f(\bm{x})\), consider the function \(g(\bm{x}) =  \dfrac{\alpha}{2} \norm{\bm{x}}^2 - f(\bm{x})\). The second derivative is given by \(\nabla^2 g(\bm{x}) = \alpha \bm{I}_d- \nabla^2 f(\bm{x})\) with eigen values \(\alpha - \lambda_i(\nabla^2 f(\bm{x}))\) for \(i=1,\cdots,d\). Note that the Hessian of a convex function should be positive semi-definite (PSD). Therefore, we pick \(\alpha = \alpha^* = \max_{\bm{x}} \lambda_{max}(\nabla^2 f(\bm{x}))\) to ensure that all the eigen values of \(\nabla^2g(\bm{x})\) are non-negative. Hence, we can conclude that \(g(\bm{x})\) is convex and that \(f(\bm{x})\) is concavifiable. Note that any $\alpha > \alpha^*$ also functions as a concavifier.
\end{proof}

\begin{lemma}
    All \(L\)-gradient Lipschitz functions are concavifiable with optimal concavifier \(\alpha^* = L\).
\end{lemma}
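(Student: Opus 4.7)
The plan is to obtain this result immediately by composing two earlier results in the paper: Lemma \ref{lemma: quadApprox}, which says that $L$-gradient Lipschitzness implies the upper quadratic approximation with constant $L$, and Lemma \ref{lemma:concavifiable}, which says the upper quadratic approximation with constant $\alpha$ is \emph{equivalent} to concavifiability with concavifier $\alpha$. Concavifiability was essentially defined to capture exactly the functions admitting an upper quadratic approximation, so the Lipschitz case should fall out as a one-line consequence.

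Concretely, starting from an $L$-gradient Lipschitz function $f$, I would first invoke Lemma \ref{lemma: quadApprox} to obtain, for all $\bm{x},\bm{y} \in \mathbb{R}^d$,
\begin{equation*}
f(\bm{y}) \leq f(\bm{x}) + \nabla f(\bm{x})^T(\bm{y}-\bm{x}) + \tfrac{L}{2}\norm{\bm{y}-\bm{x}}^2.
\end{equation*}
Then I would apply the ``if'' direction of Lemma \ref{lemma:concavifiable} with $\alpha = L$, which directly gives that $g(\bm{x}) = \tfrac{L}{2}\norm{\bm{x}}^2 - f(\bm{x})$ is convex, i.e., $f$ is concavifiable with $L$ serving as a valid concavifier. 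This establishes $\alpha^* \leq L$.

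The main obstacle is justifying that $L$ is in fact the \emph{optimal} (smallest) concavifier rather than merely an upper bound on it. A natural converse would be: if $f$ is concavifiable with concavifier $\alpha$, then $\nabla f$ is $\alpha$-Lipschitz. However, convexity of $g$ only furnishes the one-sided monotonicity $(\nabla f(\bm{x})-\nabla f(\bm{y}))^T(\bm{x}-\bm{y}) \leq \alpha \norm{\bm{x}-\bm{y}}^2$, which is strictly weaker than a two-sided bound on $\norm{\nabla f(\bm{x})-\nabla f(\bm{y})}$. To push through equality I would restrict to the doubly differentiable regime and invoke Lemma \ref{lemma:doubleDiff}: there $\alpha^* = \max_{\bm{x}} \lambda_{\max}(\nabla^2 f(\bm{x}))$ while the smallest Lipschitz constant equals $\sup_{\bm{x}} \norm{\nabla^2 f(\bm{x})}_{op} = \sup_{\bm{x}} \max_i |\lambda_i(\nabla^2 f(\bm{x}))|$, and argue the two coincide in the setting considered (when the dominant eigenvalue is non-negative). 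Otherwise the statement should be read as ``$L$ is inherited as a concavifier from the given Lipschitz constant,'' which is exactly what the two-line composition above delivers.
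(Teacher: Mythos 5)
Your route is correct where the paper's is, and it is genuinely different: the paper proves the claim directly, applying Cauchy--Schwarz to the gradient-Lipschitz condition to obtain $\langle \nabla f(\bm{x}) - \nabla f(\bm{y}), \bm{x}-\bm{y}\rangle \leq L \norm{\bm{x}-\bm{y}}^2$ and then reading off convexity of $g(\bm{x}) = \frac{\alpha}{2}\norm{\bm{x}}^2 - f(\bm{x})$ from the monotone-gradient characterization of convexity, whereas you compose Lemma \ref{lemma: quadApprox} with the ``if'' direction of Lemma \ref{lemma:concavifiable}. Your composition is legitimate --- the paper's proof of Lemma \ref{lemma:concavifiable} is a chain of equivalences anchored in the first-order \emph{iff} characterization of convexity, so the direction you need does hold --- and it is arguably cleaner in that it reuses machinery the paper has already built instead of rederiving convexity of $g$ from scratch. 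The paper's direct argument, in exchange, is self-contained and does not lean on Lemma \ref{lemma: quadApprox}, which is cited from Nesterov without proof.

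On optimality, your suspicion is exactly right, and it exposes a gap in the paper itself: the paper's proof establishes convexity of $g$ for $\alpha \geq L$ and then simply \emph{asserts} that the smallest concavifier is $\alpha^* = L$, with no converse argument. Indeed the assertion is false in general: take $f(\bm{x}) = -\frac{L}{2}\norm{\bm{x}}^2$, which is $L$-gradient Lipschitz, yet $g(\bm{x}) = \frac{\alpha}{2}\norm{\bm{x}}^2 + \frac{L}{2}\norm{\bm{x}}^2$ is convex for every $\alpha > 0$, so the optimal concavifier is far below $L$. The defensible general statement is $\alpha^* \leq L$, which is precisely what your two-line composition (and the paper's direct argument) delivers; equality holds only when the upward curvature saturates the Lipschitz constant, e.g., in the doubly differentiable regime of Lemma \ref{lemma:doubleDiff}, when $\max_{\bm{x}} \lambda_{max}(\nabla^2 f(\bm{x}))$ coincides with $\max_{\bm{x}} \max_i |\lambda_i(\nabla^2 f(\bm{x}))|$, i.e., when the dominant eigenvalue is non-negative, exactly as you note. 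Your identification of the one-sided-monotonicity obstruction is the correct reason no converse can close the gap: concavifiability bounds $\langle \nabla f(\bm{x})-\nabla f(\bm{y}), \bm{x}-\bm{y}\rangle$ from above only, and says nothing about how negative it may be. So your fallback reading --- $L$ is inherited as a valid, not necessarily optimal, concavifier --- is what both proofs actually establish, and your write-up is the more honest of the two.
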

\begin{proof}
    For Lipschitz functions, by Cauchy Schwarz inequality,
    \begin{align*}
        \left| \langle \nabla f(\bm{x}) - \nabla f(\bm{y}), \bm{x} - \bm{y} \rangle \right| &\leq \norm{ \nabla f(\bm{x}) - \nabla f(\bm{y})} \norm{\bm{x}-\bm{y}} \\
        &\leq L\norm{\bm{x}-\bm{y}}^2
    \end{align*}
    For \(g(\bm{x}) = \frac{\alpha}{2} \norm{\bm{x}}^2 - f(\bm{x})\) with \(\alpha >L\),
    \begin{align*}
        \langle \nabla f(\bm{x}) - \nabla f(\bm{y}), \bm{x} - \bm{y} \rangle &\leq \alpha \norm{\bm{x}-\bm{y}}^2 \\
        \langle \nabla f(\bm{x}) - \nabla f(\bm{y}) - \alpha(\bm{x} - \bm{y}), \bm{x} - \bm{y} \rangle &\leq 0 \\
        \langle \nabla g(\bm{y}) - \nabla g(\bm{x}), \bm{y}-\bm{x} \rangle &\geq 0
    \end{align*}
    Therefore, by the monotone gradient property of convex functions, \(g(\bm{x})\) is convex. Hence, \(f\) is concavifiable and the smallest concavifier is \(\alpha^* = L\).
\end{proof}

We also show another characterization of concavifiable functions with the aid of mid-point acceleration function, as defined below.
\paragraph{Definition 4: Mid-point acceleration function \cite{zlobec2006characterization}} For a continuous function \(f: \mathbb{R}^d \rightarrow \mathbb{R}\) and a compact set \(C\) in \(\mathbb{R}^d \), the mid-point acceleration function \(\Psi\) is given by 
\begin{equation}
    \Psi(\bm{x},\bm{y}) = \dfrac{4}{\norm{\bm{x}-\bm{y}}^2} \left[f(\bm{x}) + f(\bm{y}) - 2 f \left( \dfrac{\bm{x} + \bm{y}}{2} \right) \right], \bm{x,y} \in C
\end{equation}
\begin{lemma} \label{lemma:midpoint}
    A function is concavifiable with concavifier \(\alpha\) if and only if the mid-point acceleration function is bounded above by \(\alpha\).
\end{lemma}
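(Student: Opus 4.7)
The plan is to exploit the parallelogram identity to convert the convexity condition on $g(\bm{x}) = \frac{\alpha}{2}\norm{\bm{x}}^2 - f(\bm{x})$ into a pointwise bound on $\Psi$. Since $f$ is continuous (as stipulated in the Notation), so is $g$, and for continuous functions convexity is equivalent to midpoint convexity. Hence it suffices to show that midpoint convexity of $g$, namely
\begin{equation*}
g\!\left(\tfrac{\bm{x}+\bm{y}}{2}\right) \leq \tfrac{1}{2}\bigl(g(\bm{x}) + g(\bm{y})\bigr) \quad \forall \bm{x},\bm{y},
\end{equation*}
is equivalent to $\Psi(\bm{x},\bm{y}) \leq \alpha$ for all $\bm{x} \neq \bm{y}$.

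First, I would substitute the definition of $g$ into the midpoint-convexity inequality. The quadratic pieces simplify via the parallelogram identity $\norm{\bm{x}}^2 + \norm{\bm{y}}^2 = \tfrac{1}{2}\norm{\bm{x}+\bm{y}}^2 + \tfrac{1}{2}\norm{\bm{x}-\bm{y}}^2$, and the cross term $\frac{\alpha}{4}\norm{\bm{x}+\bm{y}}^2$ cancels. What remains after a short algebraic rearrangement is exactly
\begin{equation*}
\tfrac{\alpha}{4}\norm{\bm{x}-\bm{y}}^2 \;\geq\; f(\bm{x}) + f(\bm{y}) - 2 f\!\left(\tfrac{\bm{x}+\bm{y}}{2}\right).
\end{equation*}
Dividing both sides by $\norm{\bm{x}-\bm{y}}^2/4$ (which is positive whenever $\bm{x} \neq \bm{y}$) is clearly reversible and turns the inequality into $\alpha \geq \Psi(\bm{x},\bm{y})$. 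This chain of equivalences handles both directions of the iff at once.

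The main step to be careful about is the passage from midpoint convexity back to full convexity; this is where continuity of $g$ is needed, and I would cite the standard Sierpi\'{n}ski-type result that continuous midpoint-convex functions on $\mathbb{R}^d$ are convex. The rest is mechanical algebra, so I do not anticipate any genuine obstacle: the entire content of the lemma is the parallelogram-law cancellation that converts quadratic curvature into the discrete second-difference encoded by $\Psi$.
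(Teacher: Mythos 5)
Your proof is correct, and its algebraic core --- substituting $g(\bm{x}) = \frac{\alpha}{2}\norm{\bm{x}}^2 - f(\bm{x})$ into the midpoint-convexity inequality, cancelling the quadratic cross terms via the parallelogram identity, and dividing by $\norm{\bm{x}-\bm{y}}^2/4$ --- is exactly the computation the paper performs. The genuine difference lies in the completeness of the biconditional. The paper's proof argues only one direction: it starts from convexity of $g$, extracts the midpoint inequality $g\bigl(\tfrac{\bm{x}+\bm{y}}{2}\bigr) \leq \tfrac{1}{2}\bigl(g(\bm{x})+g(\bm{y})\bigr)$, and rearranges to $\Psi(\bm{x},\bm{y}) \leq \alpha$; it says nothing about why $\Psi \leq \alpha$ should imply convexity of $g$, even though midpoint convexity does not imply convexity without a regularity hypothesis. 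You supply precisely this missing step: you observe that every algebraic manipulation is reversible, so $\Psi(\bm{x},\bm{y}) \leq \alpha$ for all $\bm{x} \neq \bm{y}$ is \emph{equivalent} to midpoint convexity of $g$, and you then invoke the standard Sierpi\'{n}ski/Jensen-type result that a continuous midpoint-convex function on $\mathbb{R}^d$ is convex, with continuity of $g$ guaranteed by the paper's standing assumption that $f$ is continuous. In this respect your argument is strictly more careful than the paper's: it actually proves the stated ``if and only if,'' whereas the paper's proof as written establishes only the forward implication.
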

\begin{proof}
    Using the property of convexity of \(g(\bm{x})\), we use the following inequality
    \begin{equation}
        g \left( \dfrac{\bm{x} + \bm{y}}{2} \right)  \leq \dfrac{1}{2} \left[ g(\bm{x}) + g(\bm{y})\right]
    \end{equation}
    Directly substituting for \(g(\bm{x})\) as \(\dfrac{\alpha}{2} \norm{\bm{x}}^2 - f(\bm{x})\) and rearranging, we get 
    \begin{equation}
        \Psi(\bm{x},\bm{y}) \leq \alpha
    \end{equation}
\end{proof}

In the next section, we discuss the application of the concept of concavifiability to neural networks and work towards deriving the optimal step size while training.

\section{Application to neural networks}
    We consider the problem of training a neural network where an objective function is minimized with respect to the weights of the network. Recently, there has been a lot of interest in neural networks that use Rectified Linear Unit (ReLU) as their activation function. It is shown to perform well in different applications, especially image processing \cite{he2015delving}. The function does not conform to the assumptions usually made while analysing the convergence of neural networks like differentiability and smooth gradients and hence has sparked a new line of research to provide theoretical convergence guarantees \cite{li2017convergence,soltanolkotabi2017learning,virmaux2018lipschitz}. 
    
    Consider a data set with feature vector \(\bm{x_i} \in \mathbb{R}^d\) and output \(y_i \in \mathbb{R}\) for \(i=1,...N\). We consider a single hidden layer neural network with \(k\) neurons with ReLU activation. Let us denote the weight vector from the input to the \(j\)th hidden layer neuron as \(\bm{w^j}\). Note that \(\bm{w^j} \in \mathbb{R}^d\) for \(j = 1,...k\). The column vector \(\bm{w}\) refers to the stack of vectors \(\bm{w^1},...\bm{w^k}\). Note that \(\bm{w} \in \mathbb{R}^{kd}\). The output of the network is taken as the sum of outputs from each of the hidden layer neurons and is given by \(f(\bm{x},\bm{w}) = \sum_{j=1}^k \max(0,\bm{x}^T \bm{w^j})\) for input \(\bm{x}\). 
    
    We assume that there is a true underlying network with the same architecture with weights \(\bm{w^*}\) which produces the output \(f(\bm{x},\bm{w^*})\) for a given input \(\bm{x}\) as done in \cite{li2017convergence,saad1996dynamics}. We denote the training data as a set of points \((\bm{x_i},y_i)\) where \(\bm{x_i} \sim \mathcal{N}(\bm{0},I_d)\)  and \(y_i = f(\bm{x_i},\bm{w^*}) \in \mathbb{R}\) for \(i = 1,...N\).  
    
    \subsection{Computing concavifier in case of a single data point}
    When a quadratic loss is employed for this architecture, the minimization objective for a single data point \((\bm{x},y)\) is given by
     \begin{equation} \label{eqn:lossRelu}
              l(\bm{w}) = \dfrac{1}{2}\left( \left(\sum_{j=1}^k \max(0,\bm{x}^T \bm{w^j}) \right) - y \right)^2.
     \end{equation}
     
     \begin{theorem}\label{theorem:optConv}
         The concavifier for the loss function provided in Equation \ref{eqn:lossRelu} is given by
         \begin{equation}
            \alpha^* = k \norm{\bm{x}}^2
        \end{equation}
     \end{theorem}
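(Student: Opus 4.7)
My plan is to exploit the piecewise structure induced by the ReLU activations. Each hidden unit partitions $\mathbb{R}^{kd}$ into two open half-spaces according to the sign of $\bm{x}^T\bm{w^j}$, and the intersection produces an arrangement of open polyhedral regions indexed by activation patterns $\bm{a}=(a_1,\ldots,a_k)\in\{0,1\}^k$ with $a_j = \mathbb{I}(\bm{x}^T\bm{w^j}>0)$. On any such region the network output collapses to the linear function $\sum_j a_j\,\bm{x}^T\bm{w^j} = \bm{v_a}^T\bm{w}$, where $\bm{v_a}\in\mathbb{R}^{kd}$ is the stacked vector with $j$-th block $a_j\bm{x}$. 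Consequently $l$ reduces to the smooth quadratic $\tfrac{1}{2}(\bm{v_a}^T\bm{w}-y)^2$ on each region, and its Hessian is the rank-one matrix $\bm{v_a}\bm{v_a}^T$ whose only nonzero eigenvalue is $\|\bm{v_a}\|^2 = \bigl(\sum_j a_j\bigr)\|\bm{x}\|^2$.

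I would then apply the piecewise analogue of Lemma~\ref{lemma:doubleDiff}: the supremum of $\lambda_{\max}(\nabla^2 l)$ across all regions is obtained by maximizing $\sum_j a_j$ over $\bm{a}\in\{0,1\}^k$, which equals $k$ and is attained on the nonempty region where every unit is active. This simultaneously produces the bound $\alpha^*\leq k\|\bm{x}\|^2$ and its tightness, since on that all-active region the Hessian has top eigenvalue exactly $k\|\bm{x}\|^2$, so any strictly smaller choice of $\alpha$ would leave $\alpha\bm{I}-\nabla^2 l$ non-PSD there and would prevent $g(\bm{w}) = \tfrac{\alpha}{2}\|\bm{w}\|^2 - l(\bm{w})$ from being convex in that region.

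The main obstacle I anticipate is that Lemma~\ref{lemma:doubleDiff} presumes double differentiability, whereas $l$ has kinks on each hyperplane $\bm{x}^T\bm{w^j}=0$. To bridge this gap I would argue along line segments: for $\bm{w}(t)=\bm{w_1}+t(\bm{w_2}-\bm{w_1})$, the composition $l(\bm{w}(t))$ is continuous and piecewise quadratic, and on each piece its second derivative equals $(\bm{v_a}^T(\bm{w_2}-\bm{w_1}))^2 \leq k\|\bm{x}\|^2\|\bm{w_2}-\bm{w_1}\|^2$ by Cauchy-Schwarz (using $\sum_j a_j \leq k$ together with $|\bm{x}^T\bm{d^j}|\leq\|\bm{x}\|\|\bm{d^j}\|$). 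An alternative I would keep in reserve is to invoke Lemma~\ref{lemma:midpoint} and verify directly that the mid-point acceleration function satisfies $\Psi(\bm{w_1},\bm{w_2})\leq k\|\bm{x}\|^2$ by expanding $l(\bm{w_1})+l(\bm{w_2})-2l((\bm{w_1}+\bm{w_2})/2)$ in terms of $u(\bm{w})=\sum_j\max(0,\bm{x}^T\bm{w^j})$; this route sidesteps differentiability entirely and reduces the statement to a single algebraic inequality.
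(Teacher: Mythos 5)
Your region-wise computation is exactly the paper's: the Hessian of $l$ on each activation pattern is the rank-one matrix $\bm{v_a}\bm{v_a}^T$ (the paper's $\bm{a(x,w})\bm{a(x,w})^T$), its only nonzero eigenvalue is $\bigl(\sum_j a_j\bigr)\norm{\bm{x}}^2$, and maximizing over patterns gives $k\norm{\bm{x}}^2$, attained where all units are active. So far you agree with the paper. But the part you correctly identify as the obstacle --- upgrading piecewise Hessian bounds to actual concavifiability across the kink hyperplanes --- is precisely where your proposed bridge fails, and the failure is genuine, not technical. Bounding the second derivative of $t\mapsto l(\bm{w}(t))$ on each open piece is not enough for convexity of $g(\bm{w})=\tfrac{\alpha}{2}\norm{\bm{w}}^2-l(\bm{w})$: you also need the one-sided slopes of $g$ along the segment to be nondecreasing at each crossing, i.e.\ the jump of $l'$ at a kink must be nonpositive. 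Write $u(\bm{w})=\sum_j\max(0,\bm{x}^T\bm{w^j})$; along any line, $u$ is convex piecewise linear, so $u'$ jumps \emph{up} by $|\bm{x}^T\bm{d^j}|$ whenever unit $j$ switches, and therefore $l'=(u-y)\,u'$ jumps by $(u(\bm{w})-y)\,|\bm{x}^T\bm{d^j}|$. At any kink point where the residual $u(\bm{w})-y$ is strictly positive, $l'$ jumps up and $g$ has a concave corner there for \emph{every} finite $\alpha$, since the quadratic term is smooth and cannot absorb a first-derivative jump.

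This also sinks your reserve plan via Lemma \ref{lemma:midpoint}, and a two-line example shows it. Take $k=2$, $d=1$, $x=1$, $y=1$ (realizable with $\bm{w^*}=(1,0)$), and the points $\bm{w_1}=(3,-\epsilon)$, $\bm{w_2}=(3,\epsilon)$ with midpoint $(3,0)$ on the kink: then $l(\bm{w_1})+l(\bm{w_2})-2\,l\bigl((\bm{w_1}+\bm{w_2})/2\bigr)=2\epsilon+\epsilon^2/2$ while $\norm{\bm{w_1}-\bm{w_2}}^2=4\epsilon^2$, so $\Psi(\bm{w_1},\bm{w_2})=2/\epsilon+1/2\to\infty$ as $\epsilon\to 0$. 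Carried out honestly, your midpoint verification would therefore reveal that $\Psi$ is unbounded and the empirical ReLU loss is not concavifiable in the strict global sense --- the ``single algebraic inequality'' you hope to verify is false near kinks with positive residual. For what it is worth, the paper does not resolve this either: its Remark (citing \cite{li2017convergence}) simply declares the gradient well defined and the second derivative of the ReLU identically zero, so its application of Lemma \ref{lemma:doubleDiff} is valid only off the kink set and Theorem \ref{theorem:optConv} must be read in that almost-everywhere, formal sense (the smoothing result in \cite{li2017convergence} concerns the Gaussian-averaged population loss, not the pointwise loss in Equation \ref{eqn:lossRelu}). Your eigenvalue analysis per region stands; the claim that a segment or midpoint argument converts it into genuine convexity of $g$ is the step that cannot be repaired as stated.
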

     \begin{proof}
     We check for the concavifiability of the function \(l(\bm{w})\). By Lemma \ref{lemma:doubleDiff}, as the function is doubly differentiable, the concavifier is \(\alpha^* = \max_{\bm{w}} \lambda_{max}(\nabla^2 l(\bm{w}) )\) where \(\lambda_{max}\) refers to the maximum eigen value of the Hessian matrix.
     \begin{align*}
         \nabla l(\bm{w}) &= \left( \sum_{j=1}^k \max(0,\bm{x}^T \bm{w^j}) - y \right) 
         \begin{bmatrix} \mathbb{I}_{\{\bm{x}^T\bm{w^1} \geq 0\}}  \bm{x}\\ \vdots \\ \mathbb{I}_{\{\bm{x}^T\bm{w^k} \geq 0\}}  \bm{x} \end{bmatrix} 
    \end{align*}
    \begin{align*}
         \nabla^2 l(\bm{w}) &=  \begin{bmatrix} \mathbb{I}_{\{\bm{x}^T\bm{w^1} \geq 0\}} \bm{x} \\ \vdots \\ \mathbb{I}_{\{\bm{x}^T\bm{w^k} \geq 0\}}\bm{x} \end{bmatrix}  \begin{bmatrix} \mathbb{I}_{\{\bm{x}^T\bm{w^1} \geq 0\}}  \bm{x}\\ \vdots \\ \mathbb{I}_{\{\bm{x}^T\bm{w^k} \geq 0\}} \bm{x} \end{bmatrix} ^T =  \bm{a(x,w}) \bm{a(x,w})^T
     \end{align*}
     where we denote the vectors 
     \begin{multicols}{2}
        \begin{equation} \label{eqn:avec}
            \bm{a(x,w}) =\begin{bmatrix} \mathbb{I}_{\{\bm{x}^T\bm{w^1} \geq 0\}}  \bm{x}\\ \vdots \\ \mathbb{I}_{\{\bm{x}^T\bm{w^k} \geq 0\}}  \bm{x} \end{bmatrix}
        \end{equation}\break
        \begin{equation} \label{eqn:abar}
            \bm{\bar{a}(x)} = \begin{bmatrix} \bm{x}  \\ \vdots \\ \bm{x} \end{bmatrix} 
        \end{equation}
    \end{multicols}

     Note that \(\bm{a}\) is a function of the data vector \(\bm{x}\) and weights \(\bm{w}\) whereas \(\bm{\bar{a}(x)}\) depends only on the data \(\bm{x}\). They are both of dimension \(kd \times 1\).
     \paragraph{Remark} Although the ReLU function given by \(\max(0,x)\) is non-differentiable at \(x=0\), the work in \cite{li2017convergence} states that if the input is assumed to be from the Gaussian distribution, the loss function becomes smooth, and the gradient is well defined everywhere. The gradient is given by \(\mathbb{I}_{\{x \geq 0\}}\) where \(\mathbb{I}\) is the indicator function. We consider the second derivative to be zero over the entire real line.\\
     The concavifier is given by
     \begin{equation} \label{eqn:concOnePt}
         \alpha^* = \max_{\bm{w}} \lambda_{max}(\nabla^2 l(\bm{w}) )= \max_{\bm{w}} \lambda_{\max} ( \bm{a(x,w}) \bm{a(x,w})^T)
     \end{equation}
     
     We note that \(\bm{a(x,w}) \bm{a(x,w})^T\) is a rank-1 matrix and therefore, its only non-zero eigen value is given by \(\bm{a(x,w})^T \bm{a(x,w})= \norm{\bm{a(x,w})}^2\), which is also the maximum eigen value. Substituting in Equation \ref{eqn:concOnePt},
     \begin{equation} \label{eqn:bound1}
         \alpha^* = \max_{\bm{w}} \norm{\bm{a(x,w})}^2
     \end{equation}
     The norm is maximized when all the entries of the vector are non-zero, i.e., when all the indicators correspond to 1. Therefore, the optimal concavifier is given by 
     \begin{align}
         \alpha^* = \norm{\bm{\bar{a}(x)}}^2= k \norm{\bm{x}}^2
     \end{align}
     This gives us the optimal concavifier for a single input.
     \end{proof}
     
     \subsection{Extension to multiple inputs} 
     The loss function for the case of \(N\) inputs when the data set \((\bm{x_i},y_i)\) for \(i = 1,...N\) is employed is given by 
     \begin{equation} \label{eqn:lossNinputsrelu}
              l(\bm{w}) = \dfrac{1}{2N} \sum_{i = 1}^N \left(  \left(\sum_{j=1}^k \max(0,\bm{x_i}^T \bm{w^j}) \right) - y_i \right)^2.
     \end{equation}
     With a slight overload of notation, we call loss corresponding to \(N\) data points as \(l(\bm{w})\) as well. 
     The result in Theorem \ref{theorem:optConv} can be extended to \(N\) inputs as
     \begin{equation} \label{eqn:alphaNinputsrelu}
         \alpha^* = \dfrac{1}{N} \max_{\bm{w}} \lambda_{\max}\left( \sum_{i=1}^N  \bm{a(x_i,w}) \bm{a(x_i,w})^T  \right) 
     \end{equation}
     \revtwo{This is the exact value of the optimal concavifier. The computation of the optimal concavifier involves a search over all possible values the weights can take and hence is computationally heavy.} \rev{Note that any constant \(\alpha > \alpha^*\) also acts as a concavifier. Therefore, we propose the following bounds on the optimal concavifier.}
     
     \subsubsection{Upper bound based on sum of maximum eigen values}
         \begin{align}
         \alpha^* &= \dfrac{1}{N} \max_{\bm{w}} \lambda_{\max}\left( \sum_{i=1}^N  \bm{a(x_i,w}) \bm{a(x_i,w})^T  \right) \\
         &\leq \dfrac{1}{N}  \max_{\bm{w}} \sum_{i=1}^N  \left( \lambda_{\max} \left(  \bm{a(x_i,w}) \bm{a(x_i,w})^T \right) \right) \label{eqn:maxLambda}\\
        \alpha^* & \leq \dfrac{k}{N} \sum_{i=1}^N \norm{\bm{x_i}}^2 \label{eqn:final}
     \end{align}
     \revtwo{Let us denote this bound as \(\alpha_1\). Equation \ref{eqn:maxLambda} is obtained by using the following property of eigen values \(\lambda_{\max} (\bm{P}+\bm{Q}) \leq \lambda_{\max}(\bm{P}) + \lambda_{\max}(\bm{Q}) \) which follows from the fact that the largest eigen value can be expressed as the spectral norm and norms follow the triangle inequality. The \(\max\) and sum operator are interchanged as we can maximize each term individually since they are positive and use the result from Theorem \ref{theorem:optConv} } to obtain Equation \ref{eqn:final}.
     
     \subsubsection{Can we do better?}
     \revtwo{We note that the Hessian matrix in this specific problem is structured. We have a sum outer products of the vector \(\bm{a(x_i,w})\) where the vector consists of \(\bm{x_i}\) multiplied by appropriate indicators. We wish to exploit the structure of the Hessian matrix to arrive at a better bound for the optimal concavifier.} Towards that end we propose the following lemma.
     \begin{lemma} \label{lemma:betterBound}
         For a vector \(\bm{a}(\bm{x_i},\bm{w})\) as defined in Equation \ref{eqn:avec}, the following relation holds
         \begin{equation}
             \lambda_{max}\left( \sum_{i=1}^N  \bm{\bar{a}(x_i)} \bm{\bar{a}(x_i)} ^T \right) \geq \lambda_{max}\left( \sum_{i=1}^N \bm{a(x_i,w}) \bm{a(x_i,w})^T  \right) 
         \end{equation}
     \end{lemma}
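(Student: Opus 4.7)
The plan is to use the Rayleigh characterization $\lambda_{\max}(A) = \max_{\norm{\bm{v}}=1} \bm{v}^T A \bm{v}$ and to control the effect of the ReLU indicators via Cauchy--Schwarz. Writing $s_i^j := \mathbb{I}_{\{\bm{x_i}^T \bm{w^j} \geq 0\}}$ and partitioning any unit vector $\bm{v} \in \mathbb{R}^{kd}$ into blocks $\bm{v_j} \in \mathbb{R}^d$ for $j=1,\ldots,k$, the two block products simplify to $\bm{v}^T \bm{a}(\bm{x_i},\bm{w}) = \sum_{j=1}^k s_i^j \bm{v_j}^T \bm{x_i}$ and $\bm{v}^T \bm{\bar{a}}(\bm{x_i}) = \sum_{j=1}^k \bm{v_j}^T \bm{x_i}$.

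The key step is to observe that since $s_i^j \in \{0,1\}$ we have $\sum_j (s_i^j)^2 = \sum_j s_i^j \leq k$, so Cauchy--Schwarz gives $\bigl(\sum_j s_i^j \bm{v_j}^T \bm{x_i}\bigr)^2 \leq k \sum_j (\bm{v_j}^T \bm{x_i})^2$. Summing over $i$ and interchanging the order of summation yields
\[
\bm{v}^T \Bigl(\sum_{i=1}^N \bm{a}(\bm{x_i},\bm{w})\bm{a}(\bm{x_i},\bm{w})^T\Bigr)\bm{v} \;\leq\; k \sum_{j=1}^k \bm{v_j}^T X \bm{v_j} \;\leq\; k\,\lambda_{\max}(X)\,\norm{\bm{v}}^2,
\]
where $X := \sum_{i=1}^N \bm{x_i}\bm{x_i}^T$. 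Maximising over unit $\bm{v}$ then gives the scalar bound $\lambda_{\max}\bigl(\sum_i \bm{a}(\bm{x_i},\bm{w})\bm{a}(\bm{x_i},\bm{w})^T\bigr) \leq k\,\lambda_{\max}(X)$.

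To match this upper bound with the left-hand side of the lemma, I would exhibit an explicit witness for $\sum_i \bm{\bar{a}}(\bm{x_i})\bm{\bar{a}}(\bm{x_i})^T$: take the unit vector $\bm{v} = (1/\sqrt{k})[\bm{u};\bm{u};\ldots;\bm{u}]$ where $\bm{u}$ is a top unit eigenvector of $X$. A one-line computation gives $\bm{v}^T \bm{\bar{a}}(\bm{x_i}) = \sqrt{k}\,\bm{u}^T \bm{x_i}$ and hence $\bm{v}^T \bigl(\sum_i \bm{\bar{a}}(\bm{x_i})\bm{\bar{a}}(\bm{x_i})^T\bigr)\bm{v} = k\,\bm{u}^T X \bm{u} = k\,\lambda_{\max}(X)$. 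Consequently $\lambda_{\max}\bigl(\sum_i \bm{\bar{a}}(\bm{x_i})\bm{\bar{a}}(\bm{x_i})^T\bigr) \geq k\,\lambda_{\max}(X) \geq \lambda_{\max}\bigl(\sum_i \bm{a}(\bm{x_i},\bm{w})\bm{a}(\bm{x_i},\bm{w})^T\bigr)$, which is the claim.

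The main subtle point (and what I would initially expect to be the obstacle) is that one might first hope for the stronger pointwise PSD comparison $\sum_i \bm{a}(\bm{x_i},\bm{w})\bm{a}(\bm{x_i},\bm{w})^T \preceq \sum_i \bm{\bar{a}}(\bm{x_i})\bm{\bar{a}}(\bm{x_i})^T$, but this fails in general: a $\bm{v}$ whose blocks have alternating signs can drive the full sum $\sum_j \bm{v_j}^T \bm{x_i}$ to zero while the masked sum $\sum_j s_i^j \bm{v_j}^T \bm{x_i}$ remains nonzero. The comparison therefore has to be made only at the level of the maximum eigenvalue via the Rayleigh quotient, and the factor of $k$ arising from Cauchy--Schwarz is precisely what is needed to absorb the indicators.
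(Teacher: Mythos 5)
Your proof is correct, but it takes a genuinely different route from the paper's. The paper argues entirely at the level of the \(kd\times kd\) matrices: it writes \(\lambda_{max}\) as a Rayleigh quotient, chains quadratic-form comparisons through the principal eigenvectors \(\bm{\bar{g}}\), \(\bm{g_i}=\bm{a}(\bm{x_i},\bm{w})\) and \(\bm{\hat{g}}\) of the three matrices involved, and hinges on the identity \(\bm{a}(\bm{x_i},\bm{w})^T\bm{\bar{a}}(\bm{x_i}) = \bm{a}(\bm{x_i},\bm{w})^T\bm{a}(\bm{x_i},\bm{w})\) obtained from indicator idempotence (\(\mathbb{I}_{\mathcal{E}}^2=\mathbb{I}_{\mathcal{E}}\)), so that each cross term \(\bm{a}^T(\bm{\bar{a}}\bm{\bar{a}}^T)\bm{a}\) equals \(\bm{a}^T(\bm{a}\bm{a}^T)\bm{a}\). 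You instead pay a factor \(k\) via Cauchy--Schwarz against the indicator vector, collapse everything onto the \(d\times d\) Gram matrix \(X=\sum_i \bm{x_i}\bm{x_i}^T\), and close with the replicated witness \(\bm{v}=(1/\sqrt{k})[\bm{u};\ldots;\bm{u}]\). Your route buys two things. First, it proves the sharper pinch \(\lambda_{max}\left(\sum_i \bm{a}\bm{a}^T\right) \leq k\,\lambda_{max}(X) \leq \lambda_{max}\left(\sum_i \bm{\bar{a}}\bm{\bar{a}}^T\right)\), and in fact the last inequality is an equality since \(\sum_i \bm{\bar{a}}(\bm{x_i})\bm{\bar{a}}(\bm{x_i})^T\) is the Kronecker product of the all-ones \(k\times k\) matrix with \(X\); this ties the bound \(\alpha_2\) to the spectral norm of the small Gram matrix, which is also cheaper to compute than an eigenvalue of a \(kd\times kd\) matrix. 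Second, it is more watertight on a point where the paper's proof is delicate: the paper applies its Rayleigh-maximality inequality term by term, comparing \(\bm{\bar{g}}\) against vectors \(\bm{g_i}\) that are neither unit-normalized nor eigenvectors of the summand \(\bm{\bar{a}}(\bm{x_i})\bm{\bar{a}}(\bm{x_i})^T\) being evaluated (\(\bm{\bar{g}}\) maximizes the quadratic form of the sum, not of each summand), whereas your argument never manipulates individual eigenvectors at all. Your closing remark is also accurate and worth keeping: the pointwise ordering \(\sum_i \bm{a}\bm{a}^T \preceq \sum_i \bm{\bar{a}}\bm{\bar{a}}^T\) genuinely fails (a block vector with alternating signs annihilates \(\sum_j \bm{v_j}^T\bm{x_i}\) while the masked sum survives), which is precisely why both proofs must compare maximum eigenvalues rather than the matrices themselves.
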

    \begin{proof}
         The Rayleigh quotient of a Hermitian matrix \(\bm{A}\) and a non-zero vector \(\bm{g}\) is given by \(\dfrac{ \bm{g}^T \bm{A} \bm{g}}{ \bm{g}^T \bm{g}}\) and reaches the maximum eigen value when the vector \(\bm{g}\) is the eigen vector corresponding to the maximum eigen value \cite{van2007new}.
         \begin{equation}
             \lambda_{max}(\bm{A}) = \max_{\bm{g}: \norm{\bm{g}}=1} \bm{g}^T \bm{A} \bm{g} ,
         \end{equation}
         Also, observe that for any other vector of unit norm \(\bm{h} \neq \bm{g}\),
         \begin{equation} \label{eqn:mat}
             \bm{g}^T \bm{A} \bm{g} > \bm{h}^T \bm{A} \bm{h}.
         \end{equation}
         In the following proof, we denote the principal eigen vectors of \(\left( \sum_{i=1}^N  \bm{\bar{a}(x_i)} \bm{\bar{a}(x_i)} ^T \right)\), \( \left(\bm{a(x_i,w}) \bm{a(x_i,w})^T  \right)\) and \(\left( \sum_{i=1}^N \bm{a(x_i,w}) \bm{a(x_i,w})^T  \right) \) as \(\bm{\bar{g}}, \bm{g_i}\) and \(\bm{\hat{g}}\) respectively. 
         
         \begin{align*}
             \lambda_{max}\left( \sum_{i=1}^N  \bm{\bar{a}(x_i)} \bm{\bar{a}(x_i)} ^T \right) &= \bm{\bar{g}}^T \left( \sum_{i=1}^N  \bm{\bar{a}(x_i)} \bm{\bar{a}(x_i)} ^T \right) \bm{\bar{g}} \\
             &= \sum_{i=1}^N \bm{\bar{g}}^T \left( \bm{\bar{a}(x_i)} \bm{\bar{a}(x_i)} ^T \right) \bm{\bar{g}} \\
             & \geq \sum_{i=1}^N \bm{g_i}^T \left( \bm{\bar{a}(x_i)} \bm{\bar{a}(x_i)} ^T \right) \bm{g_i}^T  && \text{(Using Equation \ref{eqn:mat})}
        \end{align*}
        Note that as \(\left(\bm{a(x_i,w}) \bm{a(x_i,w})^T  \right)\) is a rank-1 matrix, the principal eigen vector is given by \( \bm{g_i} = \bm{a(x_i,w})\). 
        \begin{align} \label{eqn:step2}
            \sum_{i=1}^N \bm{g_i}^T \left( \bm{\bar{a}(x_i)} \bm{\bar{a}(x_i)} ^T \right) \bm{g_i}^T 
             &=  \sum_{i=1}^N \bm{a(x_i,w})^T  \left( \bm{\bar{a}(x_i)} \bm{\bar{a}(x_i)} \right) \bm{a(x_i,w}) 
        \end{align}
        Let us consider each term in the summation.
        \begin{align*}
            &\bm{a(x_i,w})^T  \left( \bm{\bar{a}(x_i)} \bm{\bar{a}(x_i)} ^T \right) \bm{a(x_i,w}) \\
            &=  \bigg(\bm{a(x_i,w})^T \bm{\bar{a}(x_i)} \bigg) \bigg( \bm{\bar{a}(x_i})^T \bm{a(x_i,w}) \bigg) && \text{(Regrouping)}\\
            &= \bigg(\sum_{j=1}^k \mathbb{I}_{\{\bm{x_i^T w^j \geq0\}}} \bm{x_i^T x_i} \bigg)\bigg( \sum_{j=1}^k \mathbb{I}_{\{\bm{x_i^T w^j \geq0\}}} \bm{x_i^T x_i} \bigg) && \text{(Expanding each inner product)}\\
            &= \bigg(\sum_{j=1}^k \mathbb{I}^2_{\{\bm{x_i^T w^j \geq0\}}} \bm{x_i^T x_i} \bigg)\bigg( \sum_{j=1}^k \mathbb{I}^2_{\{\bm{x_i^T w^j \geq0\}}} \bm{x_i^T x_i} \bigg) && \text{(Since \(\mathbb{I}_{\mathcal{E}} = \mathbb{I}_{\mathcal{E}}^2\))} \\
            &= \bigg(\sum_{j=1}^k \mathbb{I}_{\{\bm{x_i^T w^j \geq0\}}} \bm{x_i^T} \mathbb{I}_{\{\bm{x_i^T w^j \geq0\}}} \bm{ x_i} \bigg)\bigg(\sum_{j=1}^k \mathbb{I}_{\{\bm{x_i^T w^j \geq0\}}} \bm{x_i^T} \mathbb{I}_{\{\bm{x_i^T w^j \geq0\}}} \bm{ x_i}  \bigg)\\
            &=  \bm{a(x_i,w})^T  \left( \bm{a(x_i,w}) \bm{a(x_i,w}) ^T \right) \bm{a(x_i,w})
        \end{align*}
        Using this result in Equation \ref{eqn:step2},
        \begin{align*}
            \sum_{i=1}^N \bm{a(x_i,w})^T  \left( \bm{\bar{a}(x_i)} \bm{\bar{a}(x_i)} ^T \right) \bm{a(x_i,w}) 
             &= \sum_{i=1}^N \bm{a(x_i,w})^T  \left( \bm{a(x_i,w}) \bm{a(x_i,w}) ^T \right) \bm{a(x_i,w}) \\
             & \geq \sum_{i=1}^N \bm{\hat{g}}^T  \left( \bm{a(x_i,w}) \bm{a(x_i,w}) ^T \right)  \bm{\hat{g}} && \text{(Using Equation \ref{eqn:mat})}\\
             &=  \bm{\hat{g}}^T  \left(\sum_{i=1}^N \bm{a(x_i,w}) \bm{a(x_i,w}) ^T \right)  \bm{\hat{g}} \\
             &= \lambda_{max}\left( \sum_{i=1}^N \bm{a(x_i,w}) \bm{a(x_i,w})^T  \right) 
         \end{align*}
     \end{proof}
     Now, using Lemma \ref{lemma:betterBound}, we can rewrite Equation \ref{eqn:alphaNinputsrelu} as 
     \begin{equation} \label{eqn:alpha2}
          \alpha^* \leq \dfrac{1}{N} \lambda_{max}\left( \sum_{i=1}^N \bm{\bar{a}(x_i)} \bm{\bar{a}(x_i)} ^T  \right) 
     \end{equation}
     Let \(\alpha_2 = \frac{1}{N} \lambda_{max}\left( \sum_{i=1}^N \bm{\bar{a}(x_i)} \bm{\bar{a}(x_i)} ^T  \right)\). We now have an upper bound on the concavifier independent of the maximization over the weights. However, to compute the bound \(\alpha_2\) on the concavifier, we need to compute the maximum eigen value of the matrix, which might be computationally intensive for huge dimensions. Therefore, we propose using the following bounds for the computation of the maximum eigen value for symmetric matrices \cite{deville2019optimizing}. We are interested in the matrix
     \begin{equation}\bm{M}  =  \dfrac{1}{N} \sum_{i=1}^N \bm{\bar{a}(x_i)} \bm{\bar{a}(x_i)} ^T .
     \end{equation}
     
     \paragraph{Bound based on Gershgorin’s Circles }
     \revtwo{A well-known result in bounding the spectrum of eigen value of a square matrix is the Gershgorin’s Circles theorem \cite{varga2009matrix}. 
      Applying the Gershgorin’s theorem for the upper bound on the largest eigen value on the matrix \(\bm{M}\), we obtain an upper bound on the concavifier which we denote as \(\alpha_3\); it is given by}
     \begin{equation} \label{eqn:alpha3}
         \alpha^* \leq \max_i \left(m_{ii} + R_i(\bm{M})\right)
     \end{equation}
      where \(R_i(\bm{M}) = \sum_{i \neq j} |m_{ij}|\). 
     \paragraph{Brauer’s Ovals of Cassini}
     \revtwo{This bound was proposed to optimize Gershgorin's bound for symmetric matrices on the spectrum of eigen values. This bound is guaranteed to be provide a bound which is not worse than the Gershgorin bound \cite{deville2019optimizing}. Note that our matrix of interest \(\bm{M}\) is also symmetric. Hence, applying Brauer's upper bound on the maximum eigen value to the matrix \(\bm{M}\), we get another upper for the concavifier as \(\alpha_4\), given by}
     \begin{equation} \label{eqn:alpha4}
         \alpha^* \leq \max_{i \neq j} \left(\dfrac{m_{ii} + m_{jj}}{2} + \sqrt{(m_{ii} - m_{jj})^2 + R_i(\bm{M}) R_j(\bm{M})}  \right) 
     \end{equation}
     where \(R_i(\bm{M}) = \sum_{i \neq j} |m_{ij}|\). We denote this bound as \(\alpha_4 \).
     
     \subsection{Simulation Results}
     We validate the derived bounds through the help of simulations. The gradient descent algorithm is employed to minimize the loss function in Equation \ref{eqn:lossNinputsrelu} with respect to the weight vector \(\bm{w}\). The parameters used are \(d=10, k=5,N=1000\). We also explore the performance for other parameters in the supplementary material. The weights of the underlying network \(w^*\) are drawn from a multivariate Gaussian distribution with zero mean and identity as its covariance. For performing an explicit search to find the optimal concavifier involves a brute force search over the entire space of \(\bm{w}\) followed by an eigen value computation for a \(kd \times kd\) matrix. Therefore, the bounds derived for the maximum eigen values prove very useful when a larger data set is employed.\\
     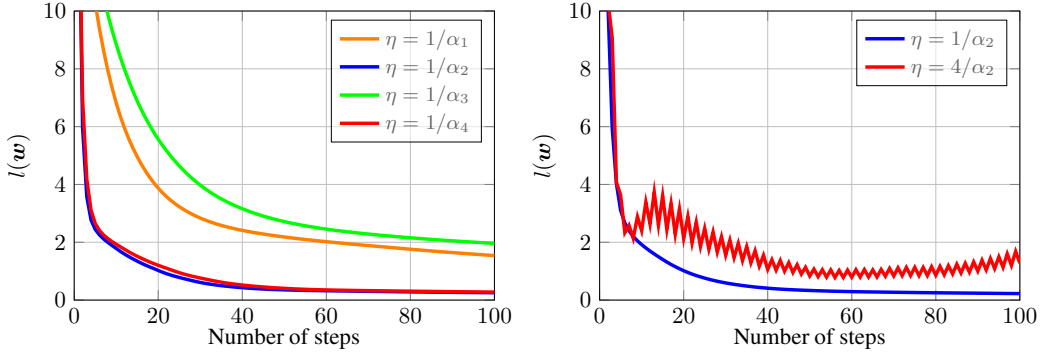
\begin{figure}[!t]
    \centering
    \begin{subfigure}{.5\textwidth}
        \resizebox{\linewidth}{!}{
\begin{tikzpicture}[thick]
    \begin{axis}[
        width=8cm,
        height=6cm,
        xmin=0,
        xmax=100,
        ymin=0.00,
        ymax=10.00,
        grid=major,
        xlabel={Number of steps},
        ylabel={\(l(\bm{w})\)},
        xlabel style={at={(0.50,0.05)}},
        ylabel style={at={(0.06,0.50)}},
        ytick={0.00,2.00,...,10.00},
        log ticks with fixed point,
        legend pos=north east,
        legend cell align={left},
        legend style={fill opacity=0.6, draw opacity=1.0, text opacity=1.0, font=\small}
        ]
        
        \addplot[orange, solid, thick, line width = 1.5pt]
            table [x=alpha1_x, y=alpha1_y, col sep=comma]{./data_bndFig.csv};
        \addlegendentry{\(\eta = 1/\alpha_1\)};
        
        \addplot[blue, solid, thick, line width = 1.5pt]
            table [x=alpha2_x, y=alpha2_y, col sep=comma]{./data_bndFig.csv};
        \addlegendentry{\(\eta = 1/\alpha_2\)};
        
        \addplot[green, solid, thick, line width = 1.5pt]
            table [x=alpha3_x, y=alpha3_y, col sep=comma]{./data_bndFig.csv};
        \addlegendentry{\(\eta = 1/\alpha_3\)};
        
        \addplot[red, solid, thick, line width = 1.5pt]
            table [x=alpha4_x, y=alpha4_y, col sep=comma]{./data_bndFig.csv};
        \addlegendentry{\(\eta = 1/\alpha_4\)};





    \end{axis}
\end{tikzpicture}}
        \caption{Convergence for different bounds on concavifier}
        \label{fig:Relu_bound}
    \end{subfigure}%
    \begin{subfigure}{.5\textwidth}
        \resizebox{\linewidth}{!}{
\begin{tikzpicture}[thick]
    \begin{axis}[
        width=8cm,
        height=6cm,
        xmin=0,
        xmax=100,
        ymin=0.00,
        ymax=10.00,
        grid=major,
        xlabel={Number of steps},
        ylabel={\(l(\bm{w})\)},
        xlabel style={at={(0.50,0.05)}},
        ylabel style={at={(0.06,0.50)}},
        ytick={0.00,2.00,...,10.00},
        log ticks with fixed point,
        legend pos=north east,
        legend cell align={left},
        legend style={fill opacity=0.6, draw opacity=1.0, text opacity=1.0, font=\small}
        ]
        
        
        
        \addplot[blue, solid, thick, line width = 1.5pt]
            table [x=a3_x, y=a3_y, col sep=comma]{./data_scales.csv};
        \addlegendentry{\(\eta = 1/ \alpha_2\)};
        
        
        \addplot[red, solid, thick,line width = 1.5pt]
            table [x=a5_x, y=a5_y, col sep=comma]{./data_scales.csv};
        \addlegendentry{\(\eta = 4/ \alpha_2\)};





    \end{axis}
\end{tikzpicture}}
        \caption{Convergence for scaled values of the best bound \(\alpha_2\)}
        \label{fig:Relu_scale}
    \end{subfigure}%
    \caption{Convergence of weights: Variation of \(l(\bm{w})\) with number of steps for \(d=10, k=5,N=1000\)}
    \label{fig:wevo_curve}
\end{figure}
     The convergence curves for values of concavifier obtained through the different bounds that are derived in the Equations \ref{eqn:final}, \ref{eqn:alpha2}, \ref{eqn:alpha3} and \ref{eqn:alpha4} are plotted in Figure \ref{fig:Relu_bound}. Note that for concavifier \(\alpha\), the optimal step size is \(1/\alpha\). Here, we reiterate that we provide upper bounds to \(\alpha^*\) and hence, convergence is guaranteed. To verify the tightness of our bound, we pick the best bound on the concavifier observed from Figure \ref{fig:Relu_bound}, i.e., \(\alpha_2\); we then scale the step sizes to observe the convergence curves as plotted in Figure \ref{fig:Relu_scale}. It is noted that for step sizes in the order of \(4/ \alpha_2\), we do not observe a strictly decreasing sequence. Therefore, we can see that our derived bound will be very helpful to select the learning rate while training.

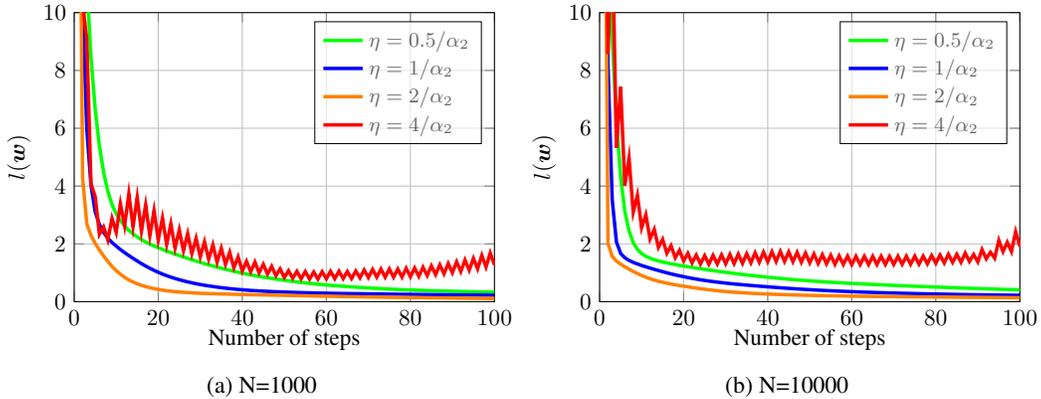
\begin{figure}[!h]
    \centering
    \begin{subfigure}{.5\textwidth}
        \resizebox{\linewidth}{!}{
\begin{tikzpicture}[thick]
    \begin{axis}[
        width=8cm,
        height=6cm,
        xmin=0,
        xmax=100,
        ymin=0.00,
        ymax=10.00,
        grid=major,
        xlabel={Number of steps},
        ylabel={\(l(\bm{w})\)},
        xlabel style={at={(0.50,0.05)}},
        ylabel style={at={(0.06,0.50)}},
        ytick={0.00,2.00,...,10.00},
        log ticks with fixed point,
        legend pos=north east,
        legend cell align={left},
        legend style={fill opacity=0.6, draw opacity=1.0, text opacity=1.0, font=\small}
        ]
        
        
        \addplot[green, solid, thick, line width = 1.5pt]
            table [x=a2_x, y=a2_y, col sep=comma]{./data_scales.csv};
        \addlegendentry{\(\eta = 0.5/ \alpha_2\)};
        
        \addplot[blue, solid, thick, line width = 1.5pt]
            table [x=a3_x, y=a3_y, col sep=comma]{./data_scales.csv};
        \addlegendentry{\(\eta = 1/ \alpha_2\)};
        
        \addplot[orange, solid, thick, line width = 1.5pt]
            table [x=a4_x, y=a4_y, col sep=comma]{./data_scales.csv};
        \addlegendentry{\(\eta = 2/ \alpha_2\)};
        
        \addplot[red, solid, thick,line width = 1.5pt]
            table [x=a5_x, y=a5_y, col sep=comma]{./data_scales.csv};
        \addlegendentry{\(\eta = 4/ \alpha_2\)};





    \end{axis}
\end{tikzpicture}}
        \caption{N=1000}
    \end{subfigure}%
    \begin{subfigure}{.5\textwidth}
        \resizebox{\linewidth}{!}{
\begin{tikzpicture}[thick]
    \begin{axis}[
        width=8cm,
        height=6cm,
        xmin=0,
        xmax=100,
        ymin=0.00,
        ymax=10.00,
        grid=major,
        xlabel={Number of steps},
        ylabel={\(l(\bm{w})\)},
        xlabel style={at={(0.50,0.05)}},
        ylabel style={at={(0.06,0.50)}},
        ytick={0.00,2.00,...,10.00},
        log ticks with fixed point,
        legend pos=north east,
        legend cell align={left},
        legend style={fill opacity=0.6, draw opacity=1.0, text opacity=1.0, font=\small}
        ]
        
        
        \addplot[green, solid, thick, line width = 1.5pt]
            table [x=a2_x, y=a2_y, col sep=comma]{./sup_datScaled10k5N10000.csv};
        \addlegendentry{\(\eta = 0.5/ \alpha_2\)};
        
        \addplot[blue, solid, thick, line width = 1.5pt]
            table [x=a3_x, y=a3_y, col sep=comma]{./sup_datScaled10k5N10000.csv};
        \addlegendentry{\(\eta = 1/ \alpha_2\)};
        
        \addplot[orange, solid, thick, line width = 1.5pt]
            table [x=a4_x, y=a4_y, col sep=comma]{./sup_datScaled10k5N10000.csv};
        \addlegendentry{\(\eta = 2/ \alpha_2\)};
        
        \addplot[red, solid, thick,line width = 1.5pt]
            table [x=a5_x, y=a5_y, col sep=comma]{./sup_datScaled10k5N10000.csv};
        \addlegendentry{\(\eta = 4/ \alpha_2\)};





    \end{axis}
\end{tikzpicture}}
        \caption{N=10000}
    \end{subfigure}
    \caption{Effect of varying number of data points on the convergence by scaling \(\alpha_2\)for \(d =10\) and \(k =5\)}
    \label{fig:N}
\end{figure}
To study the variation of the performance with respect to change in the number of data points, we compare between \(N=1000\) and \(N= 10000\) points  as shown in Figure \ref{fig:N}. We conclude that the bound on the concavifier can act as a guide while performing hyper-parameter tuning instead of searching over a larger range.\\
     
      We now study the variation in the bounds for different values of the input dimension, \(d\), the number of neurons, \(k\) and the number of data points \(N\). The values for bounds obtained are tabulated in Table \ref{tab:bounds}.
\begin{table}[!h]
    \centering
    \begin{tabular}{|c|c|c|c|c|c|c|}
    \hline
     & \multicolumn{2}{|c|}{\(d=10,k=5\)} & \multicolumn{2}{|c|}{\(N = 1000, k = 5\)} &
     \multicolumn{2}{|c|}{\(d=10,N=1000\)} \tabularnewline
     \hline
        Bound &  N=1000 & N=10000 & d=5 & d= 50 & k=2 & k=50\\
         \hline
      \(\alpha_1\) & 48.8230 & 50.1020 & 24.4085 &249.4196 & 10.0366 &250.6084\\
      \(\alpha_2\) & 5.8450 & 5.2639 & 5.2798&7.0231 & 2.2672 & 53.09 \\
      \(\alpha_3\) & 76.2652& 78.4085 &33.5060 &503.4564 & 13.7343&341.6694\\
      \(\alpha_4\) & 6.9137 & 5.4655 &5.5278 & 12.7285& 2.4322&54.92\\
      \hline
    \end{tabular}
    \caption{Bounds obtained for different parameters}
    \label{tab:bounds}
\end{table}

From the Table \ref{tab:bounds}, it is evident that over different configurations, \(\alpha_2\) produces the lowest upper bound on the concavifier.Observe that \(\alpha_4\) is a close approximation of \(\alpha_2\) even at higher dimensions whereas the other bounds provide grossly overestimated bounds of the concavifier. Therefore, for higher dimensions, when the explicit computation of the maximum eigen value becomes computationally intensive, the bound \(\alpha_4\) can be employed with hardly any degradation in the convergence performance.
   
\section{Conclusions}
In this paper, we questioned the precondition that we require the objective function to be gradient Lipschitz continuous for analyzing the convergence of the gradient descent algorithm. We introduced a class of functions known as concavifiable functions and showed that concavifiability is necessary and sufficient to show that the objective function decreases in value on the application of the gradient descent algorithm. This implies that all results that show the decrease of objective for gradient Lipschitz function can be extended for concavifiable functions as well. We also explicitly devise a way to derive the concavifier; this enables us to give a bound on the step size such that gradient descent will always converge to the first order stationary point. \rev{We derive the expression for the concavifier for a single hidden layer neural network with ReLU activation functions. As the explicit computation is hard, we provide bounds on the concavifier which can act as a guide for a more refined hyper-parameter search. This is a preliminary step to compute the concavifier for a shallow neural network analytically. A possible future direction would be to extend the analysis to a network of multiple layers and attempting to tighten the bound for the concavifier.}

\medskip

\small

\bibliographystyle{plain}

\end{document}